\DeclareMathOperator{\convw}{\xrightarrow[]{w}}
\DeclareMathOperator{\convws}{\xrightarrow[]{w^{\ast}}}
\DeclareMathOperator{\convn}{\xrightarrow[]{\|\cdot\|}}
\newtheorem{theorem}{Theorem}
\newtheorem{definition}{Definition}
\newtheorem{example}{Example}
\newtheorem{lemma}{Lemma}
\newtheorem{corollary}{Corollary}
\makeatletter\renewcommand{\subsection}{\@startsection{subsection}{1}
{0pt}{3.25ex plus 1ex minus.2ex}{-1em}{\normalfont\normalsize\bf}}\makeatother\linespread{1.3}
\begin{document}

\title{On compact (limited) operators between Hilbert and Banach spaces}
\author{Svetlana Gorokhova\\ 
\small Uznyj Matematiceskij Institut VNC RAN, Vladikavkaz, Russia}
\maketitle

\begin{abstract}
{We give orthonormal characterizations of 
collectively compact (limited) sets of linear operators 
from a Hilbert space to a Banach space.}
\end{abstract}

{\bf Keywords:} {Banach space, compact operator, limited operator, Hilbert space, orthonormal sequence}\\

{\bf MSC 2020:} 47B01, 47B02 

\smallskip
\smallskip

It is well known that a bounded linear operator between 
Hilbert spaces is compact if and only if it maps every orthonormal 
sequence onto a norm-null sequence (cf. \cite[p.\,95 and p.\,293]{Halmos1982}).
In Theorem \ref{main theorem1}, we extend this fact to linear operators from a
Hilbert space to a Banach space. More generally, in Theorem \ref{main theorem3}
we prove that a sets of linear operators from a
Hilbert space to a Banach space is collectively compact (resp., limited) if and only if 
the union of images of each orthonormal sequence is relatively compact (resp., limited).
In the sequel, $\mathcal{H}$ denotes a real or complex Hilbert space. 
For unexplained terminology and notation, we refer to \cite{A1971,AP1968,AB2006,BD1984,Halmos1982}.

\medskip
We need the following two elementary and certainly well known lemmas. 
For convenience, we include the proof of the second one, whereas
the proof of the first lemma is left as an exercise.

\begin{lemma}\label{main lemma}
{\em
Every weakly null sequence $(x_n)$ in $\mathcal{H}$ contains 
a subsequence  $(x_{n_k})$ such that 
$|(x_{n_{k_1}},x_{n_{k_2}})|\le 2^{-2(n_{k_1}+n_{k_2})}$ 
whenever $k_1\ne k_2$.
}
\end{lemma}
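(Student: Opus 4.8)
The plan is to build the subsequence by a recursive choice of indices governed by weak nullity. I would start from two elementary remarks: a weakly null sequence is norm-bounded by the uniform boundedness principle, and, crucially, $x_n\convw 0$ means $(x_n,y)\to 0$ for every $y\in\mathcal{H}$. Taking $y$ to be any vector already placed in the subsequence, this gives $(x_n,x_{n_j})\to 0$ as $n\to\infty$; this single consequence of weak nullity is all the construction needs.

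I would then select $n_1<n_2<\cdots$ one index at a time. Set $n_1=1$. Suppose $n_1<\cdots<n_k$ have been chosen. Only the finitely many vectors $x_{n_1},\dots,x_{n_k}$ are involved so far, and for each of them $(x_n,x_{n_j})\to 0$ as $n\to\infty$. Hence I may pick $n_{k+1}>n_k$ so large that $|(x_{n_{k+1}},x_{n_j})|$ drops below a prescribed level for all $j\le k$ at once. Choosing that level small enough at each stage --- and using the symmetry $|(x,y)|=|(y,x)|$ of the inner product --- makes the required estimate hold for the pair $\{n_{k_1},n_{k_2}\}$ as soon as the larger of its two indices is selected. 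Iterating produces the subsequence $(x_{n_k})$, every off-diagonal pair having been controlled at the step where its second member entered.

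The step I expect to be the main obstacle is reconciling the single inductive choice of $n_{k+1}$ with a right-hand side that itself involves the index being chosen: one must fix, \emph{before} selecting $n_{k+1}$, a target level that depends only on the current stage (equivalently, on the positions of the pair within the subsequence) and is small enough to yield the asserted off-diagonal decay $|(x_{n_{k_1}},x_{n_{k_2}})|\le 2^{-2(n_{k_1}+n_{k_2})}$ for all $k_1\ne k_2$. This is legitimate precisely because at stage $k+1$ there are only finitely many constraints and each inner product $(x_n,x_{n_j})$ tends to $0$ in $n$, so a sufficiently large $n_{k+1}$ undercuts all of them simultaneously and no earlier constraint is ever reopened. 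Once this bookkeeping is arranged, closing the recursion and reading off the stated bound is routine.
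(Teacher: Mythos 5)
The obstacle you single out is real, but your resolution of it does not work---and in fact cannot work, because the lemma as printed is false. A target level fixed \emph{before} $n_{k+1}$ is selected, depending (as you say) only on the current stage or on positions within the subsequence, is some positive number $\delta_k$; to get the asserted bound you would need $\delta_k\le 2^{-2(n_j+n_{k+1})}$, yet $n_{k+1}$ is chosen \emph{after} $\delta_k$, and it is typically forced to be large precisely because weak nullity carries no rate of decay. The smaller you make $\delta_k$, the larger $n_{k+1}$ must be, which shrinks the bound you are required to meet: the circularity is fatal. Concretely, in $\ell^2$ with orthonormal basis $(e_n)_{n\ge 0}$, put $x_n=e_n+n^{-1}e_0$ for $n\ge 1$. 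Then $(x_n)$ is weakly null, and for $n\ne m$ one has $|(x_n,x_m)|=(nm)^{-1}$, while $(nm)^{-1}>2^{-2(n+m)}$ for \emph{all} $n,m\ge 1$ (since $4^n>n$ and $4^m>m$). So every pair drawn from every subsequence violates the stated inequality; no selection scheme, however careful, can prove the lemma in the form given.

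What your induction does establish---correctly, and note that the paper offers no proof to compare with, since it leaves the lemma as an exercise---is the variant in which the exponent involves the \emph{positions} $k_1,k_2$ in the subsequence rather than the original indices: at stage $k+1$ you may pick $n_{k+1}$ so large that $|(x_{n_{k+1}},x_{n_j})|\le 2^{-2((k+1)+j)}$ for all $j\le k$, because those finitely many targets are fixed numbers known before the choice. Since $n_k\ge k$, this conclusion is genuinely weaker than the printed one, but it is all the paper actually needs: in the proof of Theorem \ref{main theorem3}, the lemma only feeds the Gram--Schmidt step, which then yields perturbation bounds of the form $\|x_{n_j}-z_{n_j}\|\le 2^{-j}$ in place of $2^{-n_j}$, and the subsequent limit estimates go through unchanged. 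So your write-up should be read as a proof of that corrected statement; as a proof of the literal statement, the step ``a level small enough to yield the asserted decay'' is a genuine gap, and an unfillable one---the fix is to repair the statement, not the proof.
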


\begin{lemma}\label{bounded lemma}
{\em
A linear operator $T:\mathcal{H}\to Y$ from a 
Hilbert space $\mathcal{H}$ to a Banach space $Y$ is bounded if and only if $T$ 
maps orthonormal sequences onto bounded sequences.
}
\end{lemma}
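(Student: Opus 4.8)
The plan is to dispatch the ``only if'' direction immediately and to prove the substantive ``if'' direction by contraposition. If $\|T\| < \infty$, then $\|Te_n\| \le \|T\|$ for every term of any orthonormal sequence $(e_n)$, so images of orthonormal sequences are bounded; this settles one direction. For the converse I would assume $T$ is \emph{unbounded} and produce an orthonormal sequence $(e_n)$ with $\|Te_n\| \to \infty$, which contradicts the hypothesis.

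Before the construction I would record two reductions. When $\mathcal{H}$ is finite-dimensional the right-hand condition is vacuous (there are no infinite orthonormal sequences) and $T$ is automatically bounded, so I may assume $\mathcal{H}$ is infinite-dimensional. The key observation within the proof is that the restriction of $T$ to any closed subspace of finite codimension is again unbounded: if $F$ is finite-dimensional and $V = F^\perp$, then the orthogonal decomposition $x = u + v$ with $u \in F$, $v \in V$ yields $\|Tx\| \le \|T|_F\|\,\|u\| + \|T|_V\|\,\|v\| \le (\|T|_F\| + \|T|_V\|)\,\|x\|$; since $T|_F$ is bounded (its domain is finite-dimensional), boundedness of $T|_V$ would force $T$ to be bounded. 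Contrapositively, $T|_V$ is unbounded.

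The construction is then a routine induction. Choose a unit vector $e_1$ with $\|Te_1\| \ge 1$. Given orthonormal $e_1, \dots, e_n$, apply the reduction to $V = \operatorname{span}(e_1, \dots, e_n)^\perp$: because $T|_V$ is unbounded, there is a unit vector $e_{n+1} \in V$ with $\|Te_{n+1}\| \ge n+1$. The resulting sequence $(e_n)$ is orthonormal with $\|Te_n\| \to \infty$, contradicting the assumption that $T$ sends orthonormal sequences to bounded sequences. Hence $T$ must be bounded.

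The hard part is really just the finite-codimension observation; once that is in place the induction carries no surprises. I note that this argument is self-contained and does not invoke Lemma \ref{main lemma}, which is kept in reserve for the compactness and limitedness theorems.
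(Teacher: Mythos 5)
Your proof is correct, but it takes a genuinely different route from the paper's. The paper also assumes $T$ unbounded and seeks a contradiction, but instead of building an orthonormal sequence directly it first asserts the existence of a \emph{linearly independent} normalized sequence $(x_n)$ with $\|Tx_n\|\ge 2^n$, applies Gram--Schmidt to get an orthonormal $(u_n)$ with $x_n\in\operatorname{span}\{u_1,\dots,u_n\}$, and then contradicts the hypothesis $\|Tu_n\|\le M$ via the estimate $2^n\le\|Tx_n\|=\bigl\|\sum_{k=1}^{n}(x_n,u_k)Tu_k\bigr\|\le nM$; the point there is that exponential growth beats the linear bound coming from the $n$-term expansion. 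Your finite-codimension observation --- that unboundedness survives restriction to the orthogonal complement of any finite-dimensional subspace --- lets you skip both Gram--Schmidt and the growth-rate bookkeeping: you obtain an orthonormal sequence with $\|Te_n\|\ge n$ outright, and the contradiction with the hypothesis is immediate. Your argument also quietly supplies a detail the paper leaves implicit: the paper never justifies why an unbounded $T$ admits a linearly independent normalized sequence on which $\|Tx_n\|\ge 2^n$, and the cleanest justification of that claim is essentially your observation (pick each new vector outside, indeed orthogonal to, the span of the previous ones). So your proof is, if anything, more self-contained than the paper's; both are elementary and of comparable length, and both correctly note that only the sufficiency direction requires work.
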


\begin{proof}
Only the sufficiency needs a proof. Let a linear operator $T:\mathcal{H}\to Y$ be 
bounded on each orthonormal sequence of $\mathcal{H}$.
Suppose $T$ is not bounded. Then, there exists a linearly independent normalized 
sequence $(x_n)$ in $\mathcal{H}$ such that $\|Tx_n\|\ge 2^n$ for all $n$.
Pick (by the Gram--Schmidt orthonormalization) an orthonormal sequence
$(u_n)$ in $\mathcal{H}$ such that $x_n\in\text{\rm span}\{u_i\}_{i=1}^n$ for all $n$.
By the assumption, there exists an $M\in\mathbb{R}$ satisfying $\|Tu_n\|\le M$ 
for all $n\in\mathbb{N}$. Then 
$$
   2^n\le\|Tx_n\|=\Bigl\|\sum\limits_{k=1}^{n}(x_n,u_k)T(u_k)\Bigl\|\le nM 
   \ \ \ \ (\forall n \in\mathbb{N}),
$$
which is absurd. The obtained contradiction completes the proof.
\end{proof}

\bigskip
Recall that a subset $A$ of a normed space $X$ is limited whenever
$f_n\rightrightarrows 0(A)$ for every w$^\ast$-null sequence $(f_n)$ of $X'$.
Each limited set is necessarily bounded. Indeed, otherwise let $A$ be a limited subset of 
a normed space $X$ and $\|a_n\|\ge n$ for a sequence $(a_n)$ in $A$.
Take a sequence $(f_n)$ in $X'$ such that $f_n(a_n)=1$ and $\|f_n\|=\|a_n\|^{-1}$
for all $n$. Then $\|f_n\|\to 0$, and hence $f_n\convws 0$. As $A$ is limited,
$f_n\rightrightarrows 0(A)$ in contrary with $f_n(a_n)=1$ for all $n$.
Therefore, the next corollary follows from Lemma \ref{bounded lemma}.

\begin{corollary}\label{bounded cor}
{\em
A linear operator $T:\mathcal{H}\to Y$ from a Hilbert space $\mathcal{H}$ 
to a Banach space $Y$ is bounded 
if it maps orthonormal sequences onto limited sets.
}
\end{corollary}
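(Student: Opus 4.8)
The plan is to deduce this directly from Lemma \ref{bounded lemma}, using the observation established in the paragraph immediately preceding the corollary, namely that every limited set is necessarily bounded. The hypothesis tells us that for every orthonormal sequence $(u_n)$ in $\mathcal{H}$, the image set $\{Tu_n : n\in\mathbb{N}\}$ is limited in $Y$. Since limited sets are bounded, this image set is bounded, which is precisely to say that the sequence $(Tu_n)$ is bounded in $Y$.

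First I would fix an arbitrary orthonormal sequence $(u_n)$ and apply the boundedness of its limited image to conclude that $\sup_n\|Tu_n\|<\infty$. Because this holds for \emph{every} orthonormal sequence, the operator $T$ satisfies exactly the sufficient condition in Lemma \ref{bounded lemma}: it carries orthonormal sequences onto bounded sequences. Invoking that lemma then yields boundedness of $T$, completing the argument.

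I do not anticipate any genuine obstacle here, since the corollary is a formal weakening of Lemma \ref{bounded lemma} once one knows that ``limited'' is stronger than ``bounded''. The only point requiring care is the logical direction: the hypothesis supplies limitedness (hence boundedness) of the images, and the conclusion is boundedness of $T$, so the implication goes through the sufficiency half of Lemma \ref{bounded lemma} with no need for the necessity half.
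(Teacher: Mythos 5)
Your proposal is correct and follows exactly the paper's own route: the paragraph preceding the corollary establishes that every limited set is bounded, and the paper then derives the corollary from Lemma \ref{bounded lemma} just as you do. Nothing is missing; the argument is a formal weakening of the lemma's hypothesis, as you note.
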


\bigskip
The following definition goes back to \cite{A1971,AP1968}. 

\begin{definition}\label{col comp and col lim}
{\em
A subset ${\cal T}$ of the space $\text{\rm L}(X,Y)$ of linear operators between
normed spaces $X$ and $Y$ is 
\begin{enumerate}[$a)$]
\item
collectively compact if the set ${\cal T}B_X=\bigcup\limits_{T\in{\cal T}}T(B_X)$ 
is relatively compact.
\item
collectively limited if ${\cal T}B_X$ is limited.
\end{enumerate}
}
\end{definition}

\noindent
It follows immediately that an operator $T$ is compact (resp., limited) if and only if the set $\{T\}$ is
collectively compact (resp., collectively limited). The following theorem is the main result of the paper.

\begin{theorem}\label{main theorem3}
{\em
Let ${\cal T}\subseteq\text{\rm L}(\mathcal{H},Y)$,
where $\mathcal{H}$ is a Hilbert space and $Y$ is a Banach space.
Then the following holds.
\begin{enumerate}[$i)$]
\item
${\cal T}$ is collectively compact if and only if the set $\{Tx_n: T\in{\cal T}; n\in\mathbb{N}\}$
is relatively compact for every orthonormal sequence $(x_n)$ in $\mathcal{H}$.
\item
${\cal T}$ is collectively limited if and only if the set $\{Tx_n: T\in{\cal T}; n\in\mathbb{N}\}$
is limited for every orthonormal sequence $(x_n)$ in $\mathcal{H}$.
\end{enumerate}
}
\end{theorem}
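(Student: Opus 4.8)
The plan is to treat the two parts in parallel, since in each the necessity is immediate and all of the content lies in the sufficiency. For necessity, observe that every vector of an orthonormal sequence lies in $B_{\mathcal H}$, so $\{Tx_n:T\in\mathcal T,\,n\in\mathbb N\}\subseteq\mathcal T B_{\mathcal H}$; as a subset of a relatively compact (resp. limited) set is again relatively compact (resp. limited), the forward implications follow at once. For the converse I would first extract from the orthonormal hypothesis a global bound $M:=\sup_{T\in\mathcal T}\|T\|<\infty$. Each $T$ is bounded by Lemma \ref{bounded lemma} (resp. Corollary \ref{bounded cor}); and for a fixed $v\in\mathcal H$ one may complete $v/\|v\|$ to an orthonormal sequence, so the hypothesis makes $\{Tv:T\in\mathcal T\}$ relatively compact (resp. limited), hence bounded. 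Thus $\mathcal T$ is pointwise bounded on the complete space $\mathcal H$, and Banach--Steinhaus yields the uniform bound $M$.

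The common mechanism for sufficiency is as follows. Given a sequence $T_na_n$ with $T_n\in\mathcal T$ and $a_n\in B_{\mathcal H}$, reflexivity of $\mathcal H$ lets me pass to a subsequence with $a_n\rightharpoonup a$, so $w_n:=a_n-a\rightharpoonup 0$; passing further I may assume $\|w_n\|\to\lambda\ge0$. If $\lambda=0$ then $\|T_nw_n\|\le M\|w_n\|\to0$, while $\{Ta:T\in\mathcal T\}$ is relatively compact (resp. limited), being a scalar multiple of the image of an orthonormal sequence extending $a/\|a\|$; this disposes of the degenerate case. If $\lambda>0$ I normalize by setting $\tilde w_n:=w_n/\|w_n\|$, which is again weakly null and of unit norm, and to this unit weakly null sequence I apply Lemma \ref{main lemma} to obtain a subsequence $(\tilde w_{n_k})$ whose pairwise inner products decay like $2^{-2(n_{k_i}+n_{k_j})}$.

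The decisive step is to replace this \emph{almost} orthonormal sequence by a genuine one: Gram--Schmidt applied to $(\tilde w_{n_k})$ produces an orthonormal sequence $(e_k)$, and the super-exponential decay furnished by Lemma \ref{main lemma} is exactly what guarantees, by an inductive estimate on the correction terms, that $\|\tilde w_{n_k}-e_k\|\to0$ (indeed summably). I expect this Gram--Schmidt estimate to be the main obstacle: one must check that the cumulative corrections $\sum_{j<k}(\tilde w_{n_k},e_j)e_j$, whose number grows with $k$, remain negligible, which is precisely why the exponent in Lemma \ref{main lemma} is taken so large. Granting this, $\|T_{n_k}(\tilde w_{n_k}-e_k)\|\le M\|\tilde w_{n_k}-e_k\|\to0$, so the behaviour of $T_{n_k}\tilde w_{n_k}$ is governed by $T_{n_k}e_k$, which ranges in the set attached to the \emph{orthonormal} sequence $(e_k)$.

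Finally I would close each part separately. For $i)$, the hypothesis makes $\{Te_k:T\in\mathcal T,k\in\mathbb N\}$ relatively compact, so $(T_{n_k}e_k)$ has a convergent subsequence; combining this with $\|T_{n_k}(\tilde w_{n_k}-e_k)\|\to0$, with convergence of a subsequence of $(T_{n_k}a)$, and with $\|w_{n_k}\|\to\lambda$, the decomposition $T_{n}a_{n}=T_{n}a+\|w_{n}\|\,T_{n}\tilde w_{n}$ delivers a convergent subsequence of $(T_na_n)$, so $\mathcal T B_{\mathcal H}$ is totally bounded and, $Y$ being complete, relatively compact. For $ii)$ I argue by contradiction: if $\mathcal T B_{\mathcal H}$ is not limited there are a $w^\ast$-null sequence $(f_n)$ in $Y'$, operators $T_n\in\mathcal T$ and $a_n\in B_{\mathcal H}$ with $|f_n(T_na_n)|\ge\varepsilon$; running the same reduction and testing the $w^\ast$-null subsequence $(f_{n_k})$ against the limited sets $\{Ta\}$ and $\{Te_k\}$ forces $f_{n_k}(T_{n_k}a_{n_k})\to0$, the desired contradiction.
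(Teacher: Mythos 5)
Your proof is correct, and while your part $i)$ is in substance the paper's argument (recast as a direct verification of sequential compactness rather than a contradiction with a $3\alpha$-separated sequence $T_ny_n$ --- your dichotomy $\lambda=0$ versus $\lambda>0$ is exactly the paper's case {\bf (B)} versus case {\bf (A)}, and both use the same chain: weak compactness of $B_\mathcal{H}$, subtraction of the weak limit, Lemma \ref{main lemma}, Gram--Schmidt, and the hypothesis applied both to an orthonormal sequence through $a/\|a\|$ and to the produced orthonormal sequence), your part $ii)$ takes a genuinely different route. The paper proves $ii)$ on the dual side: it forms the functionals $T_n'f_n$, shows they are weak$^*$-null and hence weakly null in the Hilbert space $\mathcal{H}'$, normalizes them, applies Lemma \ref{main lemma} and Gram--Schmidt in $\mathcal{H}'$, and then picks a bi-orthogonal orthonormal sequence $(y_{n_k})$ in $\mathcal{H}$ so that limitedness of $\{Ty_{n_l}\}$ contradicts $(T'_{n_k}f_{n_k})y_{n_k}\approx 1$. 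You instead rerun the domain-side decomposition $a_n=a+\|w_n\|\tilde w_n$ from part $i)$ and kill each term by testing the weak$^*$-null sequence against the limited sets $\{Ta:T\in\mathcal{T}\}$ and $\{Te_l:T\in\mathcal{T};\ l\in\mathbb{N}\}$. What you gain is uniformity: one mechanism serves both statements, with no adjoints, no identification $\mathcal{H}\cong\mathcal{H}''$, and no bi-orthogonal sequences. What the paper's version buys is that it never needs to extract a weak limit from the domain sequence (weak nullity is manufactured directly in $\mathcal{H}'$), and it isolates the structural fact that $(T_n'f_n)$ is weakly null, which has independent interest. Two small points you should make explicit: your error estimate $|f_{n_k}(T_{n_k}(\tilde w_{n_k}-e_k))|\le\|f_{n_k}\|\,M\,\|\tilde w_{n_k}-e_k\|$ requires $\sup_n\|f_n\|<\infty$, which holds because weak$^*$-null sequences over a Banach space are norm bounded (Uniform Boundedness Principle); and completing $v/\|v\|$ to an orthonormal sequence requires $\dim\mathcal{H}=\infty$, the reduction the paper states at the outset. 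The Gram--Schmidt perturbation estimate that you flag as the main obstacle is indeed the technical heart, but the paper invokes it in exactly the same unproven form (producing $(x_{n_j})$ with $\|x_{n_j}-z_{n_j}\|\le 2^{-n_j}$), so you are on equal footing there.
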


\begin{proof}
We may assume $\text{\rm dim}(\mathcal{H})=\infty$.
Only the sufficiency requires proof.
First, observe that $\sup\limits_{T\in{\cal T}}\|T\|\le M<\infty$
in both of cases. Indeed, otherwise $\sup\limits_{T\in{\cal T}}\|Tx\|=\infty$ for some
$x\in\mathcal{H}$, $\|x\|=1$ by the Uniform Boundedness Principle,
which leads to contradiction via taking an orthonormal sequence $(x_n)$ 
in $\mathcal{H}$ with $x_1=x$.

\medskip
$i)$\
Suppose, in contrary, $\bigcup\limits_{T\in{\cal T}}T(B_\mathcal{H})$ 
is not relatively compact. Then, for some $\alpha>0$, there exist a normalized 
sequence $(y_n)$ in $\mathcal{H}$ and a sequence $(T_n)$ in ${\cal T}$ satisfying 
$\|T_ny_n-T_my_m\|\ge 3\alpha$ for all $m\ne n$. 

There are two mutually exclusive cases. 
Case~{\bf (A)}: $(y_n)$ has no  norm-convergent subsequence.
Case~{\bf (B)}: $(y_n)$ has a norm-convergent subsequence.
Consider them separately.

\medskip

{\bf (A)}.
Since $B_\mathcal{H}$ is relatively weakly compact, 
by passing to subsequence, we may assume $y_n\convw y$
and $\|y_n-y_m\|\ge C>0$ for all $m\ne n$ and some $C$. 
By the weak lower semicontinuity of the norm (cf. \cite[Lemma\,6.22]{ABo2006}), 
$\|y_n-y\|\ge  C$ for all $n$. Let $z_n=\frac{y_n-y}{\|y_n-y\|}$. Then $z_n\convw 0$.
By the assumption, $(T_{n_j}y)$ is norm-Cauchy for some subsequence $(T_{n_j})$.
We may assume $\|T_{n_j}y-T_{n_i}y\|\le\min(j,i)^{-1}$ for all $j,i\in\mathbb{N}$. 
Then, for all $j\ne i$,
$$
   2\|T_{n_j}z_{n_j}-T_{n_i}z_{n_i}\|=
   2\left\|\frac{T_{n_j}(y_{n_j}-y)}{\|y_{n_j}-y\|}-\frac{T_{n_i}(y_{n_i}-y)}{\|y_{n_i}-y\|}\right\|\ge
   \|T_{n_j}(y_{n_j}-y)-T_{n_i}(y_{n_i}-y)\|\ge
$$
$$
   \|T_{n_j}y_{n_j}-T_{n_i}y_{n_i}\|-\|T_{n_j}y-T_{n_i}y\|\ge
   3\alpha-\|T_{n_j}y-T_{n_i}y\|\ge 3\alpha-\min(j,i)^{-1}.
$$
By applying Lemma \ref{main lemma} and passing to a further subsequence, we may assume 
that $|(z_{n_j},z_{n_i})|\le 2^{-2(n_j+n_i)}$ and 
$\|T_{n_j}z_{n_j}-T_{n_i}z_{n_i}\|\ge\alpha$ for all $j\ne i$.
By the Gram--Schmidt orthonormalization, there exists an orthonormal $(x_{n_j})$ 
in $\mathcal{H}$ with $\|x_{n_j}-z_{n_j}\|\le 2^{-n_j}$ for all $j$. 
By the compactness assumption, there exists a subsequence $(x_{n_{j_l}})$ such that
$T_{n_{j_l}}x_{n_{j_l}}\convn w\in Y$, and hence
$$
   \alpha\le\|T_{n_{j_l}}z_{n_{j_l}}-T_{n_{j_p}}z_{n_{j_p}}\|\le
   \|T_{n_{j_l}}x_{n_{j_l}}-T_{n_{j_p}}x_{n_{j_p}}\|+
   \|T_{n_{j_l}}(x_{n_{j_l}}-z_{n_{j_l}})\|+\|T_{n_{j_l}}(x_{n_{j_p}}-z_{n_{j_p}})\|\le
$$
$$
   \|T_{n_{j_l}}x_{n_{j_l}}-w\|+\|T_{n_{j_p}}x_{n_{j_p}}-w\|+
   (2^{-{n_{j_l}}}+2^{-{n_{j_p}}})M\to 0 \ \ \ \ \ \ (l,p\to\infty).
$$
A contradiction.

\medskip

{\bf (B)}. Assume $(y_n)$ has a norm-convergent subsequence,
say $y_{n_j}\convn y$. Let $\|y_{n_j}-y\|\le M^{-1}\alpha$ for $j\ge j_0$.
Then, for all $j,i\ge j_0$,
$$
   \|T_{n_j}y-T_{n_i}y\|=\|T_{n_j}y-T_{n_j}y_{n_j}+T_{n_j}y_{n_j}-
   T_{n_i}y_{n_i}+T_{n_i}y_{n_i}-T_{n_i}y\|\ge
$$
$$
   -\|T_{n_j}y-T_{n_j}y_{n_j}\|+\|T_{n_j}y_{n_j}-T_{n_i}y_{n_i}\|-\|T_{n_i}y_{n_i}-T_{n_i}y\|\ge
$$
$$
   3\alpha-M(\|y_{n_j}-y\|+\|y_{n_i}-y\|)\ge 3\alpha-M(M^{-1}\alpha+M^{-1}\alpha)=\alpha,
$$
which leads to a contradiction by taking an orthonormal sequence $(z_n)$ with $z_1=y$.
It follows that $\bigcup\limits_{T\in{\cal T}}T(B_\mathcal{H})$ is relatively compact.

\medskip
$ii)$\
Suppose in contrary $\bigcup\limits_{T\in{\cal T}}T(B_\mathcal{H})$ 
is not limited. Then, for some $\ast$-weakly null sequence
$(f_n)$ in $Y'$ there exist an $\alpha>0$, a sequence $(T_n)$ in ${\cal T}$, and 
a normalized sequence $(x_n)$ in $\mathcal{H}$ satisfying $|f_n(T_nx_n)|\ge\alpha$ for all $n$.

Let $0\ne z\in\mathcal{H}$. Take any orthonormal sequence $(z_n)$ with $z_1=\frac{z}{\|z\|}$.
Since $|f_n(T_nz)|\le\sup\limits_{k\in\mathbb{N}; T\in{\cal T}}|f_n(Tz_k)|$ for all $n$,
and $\lim\limits_{n\to\infty}\sup\limits_{k\in\mathbb{N}; T\in{\cal T}}|f_n(Tz_k)|=0$
by the assumption, then $|(T'_nf_n)z|=|f_n(T_nz)|\to 0$.
So, the sequence $(T'_nf_n)$ is $\ast$-weakly null in $\mathcal{H}'$,
and hence (by identifying $\mathcal{H}$ with $\mathcal{H}''$) the sequence 
$(T'_nf_n)$ is weakly null in the Hilbert space $\mathcal{H}'$.
Since $|(T'_nf_n)x_n|=|f_n(T_nx_n)|\ge\alpha$ then $\|T'_nf_n\|\ge\alpha$ for all $n$. 
Thus, by scaling $(f_n)$, we may assume $\|T'_nf_n\|=1$ for all $n$.

Lemma \ref{main lemma} implies the existence of a subsequence
$(T'_{n_k}f_{n_k})$ satisfying  
$$
   |(T'_{n_{k_1}}f_{n_{k_1}},T'_{n_{k_2}}f_{n_{k_2}})|\le 2^{-2(n_{k_1}+n_{k_2})}
   \ \ \ \ \ \ \ (k_1\ne k_2).
$$ 
Applying the Gram--Schmidt orthonormalization to $(T'_{n_k}f_{n_k})$, 
we obtain an orthonormal sequence $(z_{n_k})$ in $\mathcal{H}'$
such that $\|z_{n_k}-T'_{n_k}f_{n_k}\|\le 2^{-k}$ for all $k$. 
Pick a bi-orthogonal for $(z_{n_k})$ sequence $(y_{n_{k}})$ in $\mathcal{H}$,
i.e, $(y_{n_{k}})$ is orthonormal and $z_{n_{k}}(y_{n_{k}})=1$ for every $k$.
By the assumption, $\{Ty_{n_{l}}\}_{l\in\mathbb{N}; T\in{\cal T}}$ is a limited subset of $Y$.
Thus, $f_n\rightrightarrows 0\left(\{Ty_{n_{l}}\}_{l\in\mathbb{N}; T\in{\cal T}}\right)$, and hence
$T'_nf_n\rightrightarrows 0\big(\{y_{n_{l}}\}_{l=1}^\infty\big)$ violating
$$
   \limsup_{n\to\infty}\left(\sup\limits_{l\in\mathbb{N}}\big|(T'_nf_n)y_{n_{l}}\big|\right)\ge
   \limsup_{k\to\infty}\left(\sup\limits_{l\in\mathbb{N}}
   \big|(T'_{n_{k}}f_{n_{k}})y_{n_{l}}\big|\right)\ge
$$
$$
   \limsup_{k\to\infty}\big|(T'_{n_{k}}f_{n_{k}})y_{n_{k}}\big|=
   \limsup_{k\to\infty}\big|z_{n_{k}}(y_{n_{k}})\big|=1.
$$
The obtained contradiction completes the proof.
\end{proof}

\bigskip
Next, we give an orthonormal characterization of compact operators. 

\begin{theorem}\label{main theorem1}
{\em
Let $T:\mathcal{H}\to Y$ be a linear operator 
from a Hilbert space $\mathcal{H}$ to a Banach space $Y$. 
The following conditions are equivalent.
\begin{enumerate}[$i)$]
\item
$T$ is compact.
\item
$T$ maps every orthonormal subset of $\mathcal{H}$ into a relatively compact set.
\item
$T$ maps every orthonormal sequence of $\mathcal{H}$ to a norm-null sequence.
\item
A set $A_\varepsilon=\{a\in A:\|Ta\|\ge\varepsilon\}$ is finite for every orthonormal 
basis $A$ of $\mathcal{H}$ and every $\varepsilon>0$.
\item
$T$ maps every orthonormal basis of $\mathcal{H}$ into a relatively compact set.
\end{enumerate}
}
\end{theorem}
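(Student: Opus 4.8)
The plan is to prove the single cycle $(i)\Rightarrow(ii)\Rightarrow(iii)\Rightarrow(iv)\Rightarrow(v)\Rightarrow(i)$, reducing the whole statement to Theorem~\ref{main theorem3}. We may assume $\dim(\mathcal H)=\infty$, since in the finite-dimensional case $T$ is automatically of finite rank (hence compact), every orthonormal basis is finite, and there are no orthonormal sequences, so all five conditions hold and are trivially equivalent. Throughout I will use three elementary facts: every orthonormal sequence $(x_n)$ is weakly null in $\mathcal H$ (Bessel's inequality); a relatively compact weakly null sequence in a normed space is norm-null (pass to a norm-convergent subsequence and identify its weak limit with $0$); and every orthonormal subset of $\mathcal H$ extends to an orthonormal basis.

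The implications $(i)\Rightarrow(ii)$ and $(iii)\Rightarrow(iv)$ are immediate. For the first, $T$ compact maps any subset of $B_{\mathcal H}$, in particular the range of an orthonormal subset, into a relatively compact set. For the second, if some $A_\varepsilon$ were infinite we could list distinct elements $a_1,a_2,\dots\in A_\varepsilon$; these form an orthonormal sequence with $\|Ta_n\|\ge\varepsilon$ for all $n$, contradicting $(iii)$.

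The three remaining implications carry the content. For $(ii)\Rightarrow(iii)$, note first that $(ii)$ forces $T$ to map every orthonormal sequence to a relatively compact, hence bounded, sequence, so $T$ is bounded by Lemma~\ref{bounded lemma}; then, given an orthonormal sequence $(x_n)$, the set $\{Tx_n\}$ is relatively compact by $(ii)$, while $x_n\convw 0$ together with the boundedness of $T$ gives $Tx_n\convw 0$, whence $Tx_n\convn 0$. For $(iv)\Rightarrow(v)$, fix an orthonormal basis $A$ and an arbitrary sequence $(Ta_n)$ with $a_n\in A$: if some value recurs infinitely often we extract a constant subsequence, and otherwise we may take the $a_n$ pairwise distinct, in which case the finiteness of each $A_{1/k}$ forces $\|Ta_n\|\to 0$; either way a norm-convergent subsequence exists, so $T(A)$ is relatively compact. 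Finally, for $(v)\Rightarrow(i)$, given any orthonormal sequence $(x_n)$ I extend it to an orthonormal basis $A$; then $\{Tx_n:n\in\mathbb N\}\subseteq T(A)$ is relatively compact by $(v)$, so Theorem~\ref{main theorem3}$(i)$, applied to $\mathcal T=\{T\}$, shows that $\{T\}$ is collectively compact, i.e.\ that $T$ is compact.

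I expect the main hands-on obstacle to be the step $(iv)\Rightarrow(v)$, where relative compactness of the full image $T(A)$ must be extracted from the merely level-set–finite hypothesis on $A_\varepsilon$: the key is to establish sequential precompactness through a dichotomy on whether the indexing basis vectors repeat, using that along pairwise distinct basis vectors the norms $\|Ta_n\|$ are forced to $0$. The conceptual pivot, however, is the reduction in $(v)\Rightarrow(i)$: extending an orthonormal sequence to a basis lets Theorem~\ref{main theorem3} absorb both the boundedness of $T$ (obtained there via the uniform boundedness principle) and the passage from single orthonormal sequences back to all of $B_{\mathcal H}$, so that no separate compactness argument on the unit ball is needed here.
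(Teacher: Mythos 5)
Your proof is correct, and all five implications check out: the boundedness of $T$ in $(ii)\Rightarrow(iii)$ is properly secured via Lemma~\ref{bounded lemma}, the dichotomy in $(iv)\Rightarrow(v)$ (repeat a value infinitely often, or pass to pairwise distinct basis vectors along which $\|Ta_n\|\to 0$) is sound, and the invocation of Theorem~\ref{main theorem3}$\,i)$ with ${\cal T}=\{T\}$ in $(v)\Rightarrow(i)$ is legitimate since subsets of relatively compact sets are relatively compact and every orthonormal sequence extends to an orthonormal basis.

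The route differs from the paper's in its decomposition, though the essential ingredients coincide. The paper organizes the proof as a hub around condition $(iii)$: it proves $i)\Leftrightarrow iii)$ and $ii)\Leftrightarrow iii)$ separately, then closes $iii)\Rightarrow iv)\Rightarrow v)\Rightarrow ii)$, invoking Theorem~\ref{main theorem3} at the step $iii)\Rightarrow i)$; this costs an extra argument ($iii)\Rightarrow ii)$, done by discarding repeated terms of a sequence in $T(A)$) that your single cycle $(i)\Rightarrow(ii)\Rightarrow(iii)\Rightarrow(iv)\Rightarrow(v)\Rightarrow(i)$ renders unnecessary. In exchange, your cycle forces a genuine argument at $(iv)\Rightarrow(v)$, which the paper dismisses as trivial --- and indeed it can be made trivial: by $(iv)$, $T(A)\subseteq T(A_\varepsilon)\cup \varepsilon B_Y$ is a finite set union a small ball for every $\varepsilon>0$, hence totally bounded, hence relatively compact in the Banach space $Y$; your sequential dichotomy is correct but is essentially the paper's discarded-repetitions trick relocated to a different edge of the diagram. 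Both proofs lean on the same three pillars: Lemma~\ref{bounded lemma} for boundedness, the fact that a weakly null relatively compact sequence is norm-null (your $(ii)\Rightarrow(iii)$ is the paper's $ii)\Rightarrow iii)$ almost verbatim), and Theorem~\ref{main theorem3} for the return to compactness of $T$ on all of $B_{\mathcal H}$. Your arrangement is the more economical of the two --- five implications, each used once, with the only nontrivial analytic work concentrated in $(ii)\Rightarrow(iii)$ and the appeal to Theorem~\ref{main theorem3}.
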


\begin{proof}
We may assume $\text{\rm dim}(\mathcal{H})=\infty$.

\medskip
$i)\Longrightarrow iii)$:\
Let $(x_n)$ be an orthonormal sequence in $\mathcal{H}$. 
Then $x_n\stackrel{\rm w}{\to}0$, and hence $\|Tx_n\|\to 0$ because $T$ is compact.

\medskip
$iii)\Longrightarrow i)$:\
It follows from Theorem \ref{main theorem3}~$i)$.

\medskip
$ii)\Longrightarrow iii)$:\
The operator $T$ is bounded by Lemma \ref{bounded lemma}.
Let $(u_n)$ be an orthonormal sequence of $\mathcal{H}$.
On the way to a contradiction, suppose $\|Tu_n\|\not\to 0$.
By passing to a subsequence, we may suppose $\|Tu_n\|\ge M$ for all $n$ and some $M>0$.
By $i)$, the set $\{Tu_n:n\in\mathbb{N}\}$ is relatively compact,
and hence $(Tu_n)$ has a norm-convergent subsequence,
say $\lim\limits_{j\to\infty}\|Tu_{n_j}-y\|=0$ for some $y\in Y$. 
As $(u_{n_j})$ is orthonormal, $u_{n_j}\convw 0$. Since $T$ is bounded,
$Tu_{n_j}\convw 0$. As $Tu_{n_j}\convn y$, we obtain $y=0$, and hence
$(Tu_{n_j})$ is norm-null, which is absurd since $\|Tu_n\|\ge M>0$ for all $n$.

\medskip
$iii)\Longrightarrow ii)$:\
Let $A$ be an orthonormal subset of $\mathcal{H}$. 
Pick a sequence $(y_n)$ in $T(A)$. We need to show that
$y_{n_j}\convn y$ for some subsequence $(y_{n_j})$ and
some $y\in Y$. Clearly, $(y_n)$ has a norm-convergent subsequence
when some term $y_n$ of $(y_n)$ occurs infinitely many times. 
So, suppose each term of the sequence $(y_n)$ occurs at most
finitely many times. Keeping the first of occurrences of each $y_n$ in $(y_n)$
and removing others, we obtain a subsequence $(y_{n_j})$ whose terms are distinct.
Pick an $x_n\in A$ with $y_n=Tx_n$ for each $n\in\mathbb{N}$.
Then $(x_{n_j})$ is an orthonormal sequence of $\mathcal{H}$,
and hence $y_{n_j}=Tx_{n_j}\convn 0$ by $iii)$.

\medskip
Implications $iii)\Longrightarrow iv)\Longrightarrow v)$ are trivial, while
$v)\Longrightarrow ii)$ is a consequence of the fact that each 
orthonormal $S\subseteq\mathcal{H}$ has an extention to an orthonormal basis.
\end{proof}

\noindent
The condition $v)$ 
of Theorem \ref{main theorem1} cannot be replaced by the condition that
$T$ maps some orthonormal basis of $\mathcal{H}$ into a relatively compact set 
(see, for example \cite[p.292]{Halmos1982}).

\bigskip
Recall that a linear operator $T:X\to Y$ between normed spaces is limited 
if $T(B_X)$ is limited in $Y$, where $B_X$ is the closed unit ball of $X$. 
It follows immediately that a bounded linear operator $T$ is limited if
$T'$ carries w$^\ast$-null sequences to norm-null ones.
It is well known that compact operators from $\mathcal{H}$ to $Y$ agree with limited operators
whenever $Y$ is reflexive or separable~\cite{BD1984}. In general, they are different.

\begin{example}{\em
Let $\mathcal{B}$ be an ortonormal basis in an infinite dimensional Hilbert space $\mathcal{H}$, 
and let $Y=\ell^\infty(\mathcal{B})$ be the space of bounded scalar functions on $\mathcal{B}$. 
The operator $T:\mathcal{H}\to Y$ defined by
$(Tx)(u)=(x,u)$ for $x\in\mathcal{H}$ and $u\in\mathcal{B}$ is bounded yet not compact
(the $u$-th coordinate of $Tu$ is one whereas all others are zeros).
Since $\mathcal{B}$ is an orthonormal basis, 
$T(B_{\mathcal{H}})\subseteq B_{c_0(\mathcal{B})}$.
It follows from Phillip’s lemma (cf. \cite[Theorem~4.67]{AB2006}) that 
$B_{c_0(\mathcal{B})}$ is a limited subset of $Y$, and hence $T$ is limited.
}
\end{example}

\begin{theorem}\label{cor-lim}
{\em
Let $T:\mathcal{H}\to Y$ be a linear operator 
from an infinite dimensional Hilbert space $\mathcal{H}$ to a Banach space $Y$. 
The following conditions are equivalent.
\begin{enumerate}[$i)$]
\item
$T$ is limited.
\item
$T$ maps every orthonormal subset of $\mathcal{H}$ onto a limited set.
\item
$T$ maps every orthonormal basis of $\mathcal{H}$ onto a limited set.
\item
$T$ maps every orthonormal sequence of $\mathcal{H}$ onto a limited set.
\end{enumerate}
}
\end{theorem}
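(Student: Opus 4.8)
The plan is to anchor everything on Theorem \ref{main theorem3}~$ii)$ specialized to the singleton family ${\cal T}=\{T\}$, and then to move between the four formulations by purely set-theoretic manipulations (passing between orthonormal sequences, countable orthonormal subsets, and orthonormal bases) together with two elementary hereditary facts about limited sets: that a subset of a limited set is limited, and that a finite set is automatically limited.

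First I would record the equivalence $i)\Leftrightarrow iv)$ as an immediate consequence of Theorem \ref{main theorem3}~$ii)$. Since $T$ is limited exactly when the singleton $\{T\}$ is collectively limited, and since for an orthonormal sequence $(x_n)$ the set $\{Tx_n:n\in\mathbb{N}\}$ is precisely the image of the range $\{x_n:n\in\mathbb{N}\}$ under $T$, Theorem \ref{main theorem3}~$ii)$ with ${\cal T}=\{T\}$ reads verbatim as: $T$ is limited if and only if $T$ maps every orthonormal sequence onto a limited set. That is exactly $i)\Leftrightarrow iv)$, with no further work required.

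Next I would close the cycle $ii)\Rightarrow iii)\Rightarrow iv)\Rightarrow ii)$. The implication $ii)\Rightarrow iii)$ is trivial, as an orthonormal basis is in particular an orthonormal subset. For $iii)\Rightarrow iv)$, given an orthonormal sequence $(x_n)$ I extend its range to an orthonormal basis $A$ (every orthonormal set is contained in a maximal one); then $\{Tx_n:n\in\mathbb{N}\}\subseteq T(A)$, and since $T(A)$ is limited by $iii)$ and subsets of limited sets are limited, condition $iv)$ follows.

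The one step carrying genuine content is $iv)\Rightarrow ii)$, where I must pass from countable orthonormal sequences to an \emph{arbitrary}, possibly uncountable, orthonormal subset $A$. I would argue by contradiction: if $T(A)$ were not limited, there would be a w$^\ast$-null sequence $(f_n)$ in $Y'$ and an $\alpha>0$ with $\sup_{a\in A}|f_n(Ta)|\ge\alpha$ for all $n$ (after passing to a subsequence). Choosing $a_n\in A$ with $|f_n(Ta_n)|\ge\alpha/2$ produces a \emph{countable} orthonormal subset $B=\{a_n:n\in\mathbb{N}\}\subseteq A$. If $B$ is finite then $T(B)$ is finite, hence limited; if $B$ is infinite then enumerating its distinct elements as an orthonormal sequence and invoking $iv)$ shows $T(B)$ is limited. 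Either way $f_n\rightrightarrows 0\bigl(T(B)\bigr)$, contradicting $\sup_{b\in B}|f_n(Tb)|\ge|f_n(Ta_n)|\ge\alpha/2$. Hence $T(A)$ is limited. Combining $i)\Leftrightarrow iv)$ with the cycle $ii)\Rightarrow iii)\Rightarrow iv)\Rightarrow ii)$ yields the equivalence of all four conditions. The main obstacle is exactly this uncountable-to-countable reduction, and it is also the place where one must be careful to note that finite sets are limited, so that the finite case of $B$ does not slip through. (Throughout, $T$ is bounded by Corollary \ref{bounded cor}, which may be invoked if needed.)
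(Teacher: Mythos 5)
Your proof is correct, and its engine is the same as the paper's: Theorem \ref{main theorem3}~$ii)$ applied to the singleton ${\cal T}=\{T\}$. The difference is organizational, and it costs you effort. The paper runs the single cycle $i)\Rightarrow ii)\Rightarrow iii)\Rightarrow iv)\Rightarrow i)$, in which every implication except the last is trivial: $i)\Rightarrow ii)$ is immediate because any orthonormal set $A$ lies in $B_\mathcal{H}$, so $T(A)\subseteq T(B_\mathcal{H})$ and subsets of limited sets are limited; $ii)\Rightarrow iii)$ and $iii)\Rightarrow iv)$ are exactly as you describe; and only $iv)\Rightarrow i)$ invokes Theorem \ref{main theorem3}~$ii)$. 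You instead establish $i)\Leftrightarrow iv)$ from Theorem \ref{main theorem3}~$ii)$ and then close a separate cycle $ii)\Rightarrow iii)\Rightarrow iv)\Rightarrow ii)$, whose last step you carry out by hand via a w$^\ast$-null sequence and a countable-reduction argument. That argument is sound — it amounts to the observation that limitedness is detected on countable subsets, and you correctly dispose of the finite case — but it is superfluous: once you have $iv)\Rightarrow i)$ (which you do, since you cite the full equivalence in Theorem \ref{main theorem3}~$ii)$), the implication $iv)\Rightarrow ii)$ follows by composing with the trivial $i)\Rightarrow ii)$ above, with no new work. So what your route buys is a self-contained proof of $iv)\Rightarrow ii)$ that bypasses collective limitedness; what the paper's route buys is brevity — exactly one implication in the whole theorem is non-trivial, and it is outsourced to Theorem \ref{main theorem3}.
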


\begin{proof}
Implications $i)\Longrightarrow ii)\Longrightarrow iii)\Longrightarrow iv)$ are trivial,
and $iv)\Longrightarrow i)$ follows from Theorem \ref{main theorem3}~$ii)$.
\end{proof}

\noindent
The condition $ii)$ of Theorem \ref{cor-lim} cannot be replaced by the condition that
$T$ carries some orthonormal basis of $\mathcal{H}$ onto a limited set.
Indeed, limited sets in $\mathcal{H}$ agree with relatively compact sets \cite{BD1984}, and hence 
limited operators from $\mathcal{H}$ to $\mathcal{H}$
coincide with compact operators. So, apply again \cite[p.292]{Halmos1982}.
Clearly, the orthonormal sets, bases, and sequences 
in Theorems \ref{main theorem1} and \ref{cor-lim}
can be replaced by bounded orthogonal ones. The Banach lattice setting,
where the bounded disjointedness can be used instead of bounded orthogonality
is much more rigid, and it produces two new classes of disjointedly compact
and disjointedly limited operators \cite{EEG2024}.


\addcontentsline{toc}{section}{KAYNAKLAR}

\bibliographystyle{plain}
\end{document}